\title[A note on higher coherence of graphs of groups]{A note on higher coherence of graphs of groups}
\date{November 25, 2025}
\subjclass[2020]{57M07, 20J05, 20F36}
\keywords{Higher coherence, graphs of groups, right-angled Artin groups}
\author[K.~Li]{Kevin Li}
\address{Institut f\"ur Mathematik, Freie Universit\"at Berlin, 14195 Berlin, Germany}
\email{kevin.li@fu-berlin.de}
\author[L.J.~S\'anchez Salda\~na]{Luis Jorge S\'anchez Salda\~na}
\address{Departamento de Matem\'aticas, Facultad de Ciencias, Universidad Nacional Aut\'onoma de M\'exico, 04510 Ciudad de M\'exico, Mexico}
\email{luisjorge@ciencias.unam.mx}
\theoremstyle{definition}
\newtheorem{defn}{Definition}[section]
\newtheorem{ex}[defn]{Example}
\newtheorem{rem}[defn]{Remark}
\newtheorem*{ack}{Acknowledgements}
\theoremstyle{plain}
\newtheorem{thm}[defn]{Theorem}
\newtheorem{lem}[defn]{Lemma}
\newtheorem{cor}[defn]{Corollary}
\newcommand{\enum}{\rm{(\roman*)}}
\newcommand{\IN}{\ensuremath{\mathbb{N}}}
\newcommand{\IZ}{\ensuremath{\mathbb{Z}}}
\newcommand{\IQ}{\ensuremath{\mathbb{Q}}}
\newcommand{\sfF}{\ensuremath{\mathsf{F}}}
\newcommand{\sfFP}{\ensuremath{\mathsf{FP}}}
\newcommand{\calT}{\ensuremath{\mathcal{T}}}
\DeclareMathOperator{\cd}{cd}
\begin{document}

\begin{abstract}
	For~$n\in \mathbb{N}$, a group is called $n$-coherent if every subgroup of type~$\mathsf{F}_n$ is of type~$\mathsf{F}_{n+1}$.
	For~$n\ge 1$, we observe that graphs of groups with $n$-coherent vertex groups and virtually poly-cyclic edge groups are $n$-coherent.
	We deduce the $n$-coherence of certain right-angled Artin groups.
\end{abstract}

\maketitle

\section{Introduction}

A group is said to be \emph{coherent} if every finitely generated subgroup is finitely presented.
This notion has many connections, e.g., to low-dimensional topology, geometric group theory, graph theory, and ring theory~\cite{Wise_survey}.
The subject of this note is a higher version of coherence.

For~$n\in \IN$, a group~$G$ is \emph{of type~$\sfF_n$} (resp.\ \emph{of type~$\sfF_\infty$}) if there exists a CW-model for the classifying space~$BG$ with finite $n$-skeleton (resp.\ of finite type).
Every group is of type~$\sfF_0$.
For a group being of type~$\sfF_1$ (resp.\ of type~$\sfF_2$) is equivalent to being finitely generated (resp.\ finitely presented).

\begin{defn}
\label{defn:higher coherence}
	Let~$n,m\in \IN\cup \{\infty\}$ with $n<m$.
	We say that a group~$G$ is \emph{$(n,m)$-coherent} if every subgroup of~$G$ that is of type~$\sfF_n$ is of type~$\sfF_m$.
\end{defn}

Definition~\ref{defn:higher coherence} generalises the usual notions: $(0,1)$-coherence is being Noetherian (also called slender), $(1,2)$-coherence is coherence, and $(n,n+1)$-coherence is recently called $n$-coherence~\cite{Kochloukova-Vidussi23, Vidussi23, Kochloukova-Vidussi25}.
Clearly, $(n,m)$-coherence implies $(n',m')$-coherence for $n\le n'< m'\le m$ and is preserved under taking subgroups.
Moreover, $(n,m)$-coherence passes to finite index overgroups and hence is invariant under commensurability.
Virtually poly-cyclic groups are $(0,\infty)$-coherent.
Groups of geometric dimension~$\le n$ are $(n,\infty)$-coherent.
The direct product of free groups~$(F_2)^k$ is $(n,n+1)$-coherent if and only if~$n\ge k$.

The following is a classical combination theorem for coherence of graphs of groups.

\begin{thm}[{Karrass--Solitar~\cite{KS70}}]
\label{thm:KS}
	Let $G$ be the fundamental group of a graph of groups whose vertex groups are $(1,2)$-coherent and whose edge groups are $(0,1)$-coherent.
	Then~$G$ is $(1,2)$-coherent.
\end{thm}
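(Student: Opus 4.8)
The plan is to take an arbitrary subgroup $H\le G$ of type $\sfF_1$ and to exhibit a finite graph of groups decomposition of $H$ from which being of type $\sfF_2$ is manifest. By Bass--Serre theory, the given decomposition of $G$ corresponds to an action of $G$ on a tree $T$ whose vertex and edge stabilizers are the conjugates of the vertex and edge groups. Restricting to $H$, either $H$ fixes a vertex---in which case $H$ embeds into a conjugate of a $(1,2)$-coherent vertex group and is finitely presented directly---or $H$ acts on a minimal invariant subtree $T_H$, and Bass--Serre theory presents $H$ as the fundamental group of the quotient graph of groups $\mathcal{H}$ on $H\backslash T_H$, with vertex groups of the form $H\cap gG_vg^{-1}$ and edge groups of the form $H\cap gG_eg^{-1}$. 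Recalling that the fundamental group of a finite graph of groups with finitely presented vertex groups and finitely generated edge groups is itself finitely presented, it suffices to establish these three finiteness properties for $\mathcal{H}$.

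For the underlying graph, I would fix a basepoint $v_0\in T_H$ and a finite generating set $S$ of $H$; the union $P$ of the geodesics $[v_0,sv_0]$ for $s\in S$ is a finite subtree, and $H\cdot P$ is a connected $H$-invariant subtree, so $H\cdot P=T_H$ by minimality and $H\backslash T_H$ is the finite image of $P$. The edge groups $H\cap gG_eg^{-1}$ are finitely generated for free: each is a subgroup of a conjugate of an edge group $G_e$, and $(0,1)$-coherence of $G_e$ means that all of its subgroups are finitely generated.

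The heart of the matter, and the step I expect to be the main obstacle, is to show that each vertex group $H_v=H\cap gG_vg^{-1}$ is finitely generated. This is precisely where finiteness of the edge groups is indispensable: dropping it makes the statement false, as already shown by the lamplighter group $\IZ/2\wr\IZ$, an HNN extension of an infinitely generated base group along an isomorphic---hence infinitely generated---edge group. Granting that the edge groups are finitely generated, I would argue through Britton normal form in $\mathcal{H}$. Choosing a finite generating set of $H$ built from elements of the vertex groups together with the finitely many stable letters, and enlarging it by finite generating sets of all edge groups and of their images under the edge monomorphisms, one shows that any element represented by a word lying in a single vertex group can be reduced---by repeatedly replacing each pinch $t_e c t_e^{-1}$ with its image in the adjacent edge group---to a word over this finite set; hence the chosen finite set already generates $H_v$. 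Finally, each finitely generated $H_v$ sits inside a conjugate of the $(1,2)$-coherent group $G_v$ and is therefore finitely presented, and assembling the finite underlying graph, the finitely presented vertex groups, and the finitely generated edge groups produces a finite presentation of $H$.
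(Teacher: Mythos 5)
Your proposal is correct and follows essentially the same route as the paper's argument (which proves Theorem~\ref{thm:KS} as the case $n=1$, $m=2$ of Theorem~\ref{thm:gog_coherence}): restrict the Bass--Serre tree to the finitely generated subgroup~$H$, pass to a cocompact invariant subtree, obtain finitely generated edge stabilisers from the Noetherian edge groups, deduce that the vertex stabilisers are finitely generated and hence finitely presented by $(1,2)$-coherence, and reassemble via the combination theorem for type~$\sfF_2$. The only difference is presentational: where the paper cites the standard inheritance facts as black boxes (parts~(i) and~(ii) of Theorem~\ref{thm:gog}), you re-prove part~(ii) by hand via Britton normal form (correctly, up to the slip that a pinch $t_e c t_e^{-1}$ is replaced by its image in the adjacent \emph{vertex} group, not edge group), which changes nothing in the structure of the argument.
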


We prove a combination theorem for higher coherence of graphs of groups.
We use the convention that $\infty-1=\infty$.

\begin{thm}
\label{thm:gog_coherence}
	Let~$n,m\in \IN\cup \{\infty\}$ with $1\le n< m$.
	Let~$G$ be the fundamental group of a graph of groups whose vertex groups are $(n,m)$-coherent and whose edge groups are
	\[
		\begin{cases}
			(0,m-1)\text{-coherent} & \text{if } n=1;
			\\
			(1,m-1)\text{-coherent} & \text{if }n\ge 2.
		\end{cases}
	\]
	Then~$G$ is $(n,m)$-coherent.
\end{thm}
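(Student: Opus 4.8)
The plan is to take a subgroup $H\le G$ of type $\sfF_n$ and study its action on the Bass--Serre tree $T$ of the given decomposition of $G$. Since $n\ge 1$, the group $H$ is finitely generated, so there is a cocompact $H$-invariant subtree $T_0\subseteq T$ (take the $H$-orbit of the convex hull of $\{v_0\}\cup\{sv_0: s\in S\}$ for a finite generating set $S$); replacing $T$ by $T_0$, I may assume the action is cocompact. By Bass--Serre theory $H$ is then the fundamental group of a \emph{finite} graph of groups whose vertex groups are the stabilisers $H_v=H\cap gG_vg^{-1}$ and whose edge groups are the stabilisers $H_e=H\cap gG_eg^{-1}$; thus each $H_v$ lies in a conjugate of a vertex group and each $H_e$ in a conjugate of an edge group. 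The idea is to feed this decomposition into the standard combination theorems for finiteness properties and to use the coherence hypotheses to upgrade the stabilisers.

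I would use two facts about a group $K$ acting cocompactly on a contractible tree with vertex stabilisers $K_v$ and edge stabilisers $K_e$. The \emph{build-up} direction: if every $K_v$ is of type $\sfF_m$ and every $K_e$ is of type $\sfF_{m-1}$, then $K$ is of type $\sfF_m$ (contractibility of the tree via the equivariant chain complex). The \emph{going-down} direction: if $K$ is of type $\sfF_n$ and every $K_e$ is of type $\sfF_n$, then every $K_v$ is of type $\sfF_n$ (from the short exact sequence $0\to C_1(T_0)\to C_0(T_0)\to\IZ\to 0$ of $\IZ K$-modules, since the class of $\sfFP_n$-modules is closed under extensions, together with the classical fact that the vertex groups are finitely presented when $K$ and the $K_e$ are). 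Granting control of the edge stabilisers, the scheme is: the $H_e$ are of type $\sfF_{m-1}$, hence of type $\sfF_n$ as $m-1\ge n$; going-down makes each $H_v$ of type $\sfF_n$; the $(n,m)$-coherence of the vertex groups upgrades each $H_v$ to type $\sfF_m$; and build-up then yields $H$ of type $\sfF_m$.

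Everything therefore reduces to controlling the edge stabilisers $H_e$, and this is where the dichotomy in the hypotheses enters. When $n=1$, the edge groups are $(0,m-1)$-coherent, so \emph{every} one of their subgroups is of type $\sfF_{m-1}$; hence each $H_e$ is automatically of type $\sfF_{m-1}$, and the scheme above goes through verbatim (the going-down step specialising to the classical statement that a finitely generated group acting cocompactly on a tree with finitely generated edge stabilisers has finitely generated vertex stabilisers). For $n\ge 2$ the edge groups are only assumed $(1,m-1)$-coherent, so the coherence upgrade for $H_e$ can be invoked only once one knows that $H_e$ is of type $\sfF_1$, i.e.\ finitely generated.

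The main obstacle is precisely this last point. For $n\ge 2$, finite presentability of $H$ does \emph{not} by itself force the edge stabilisers of a cocompact tree action to be finitely generated: for example the Baumslag--Solitar group $BS(1,2)=\langle a,t\mid tat^{-1}=a^2\rangle$ is finitely presented yet acts on a line as an ascending HNN extension with vertex and edge stabilisers equal to $\IZ[1/2]$, which is not finitely generated even though it is $(1,\infty)$-coherent. So the bootstrap cannot rest on finite generation of the stabilisers in isolation, and the decisive step must be carried out homologically: I would run the isotropy spectral sequence $E^1_{p,q}=\bigoplus_\sigma H_q(H_\sigma)\Rightarrow H_{p+q}(H)$ for the cocompact action of $H$ on the contractible tree $T_0$, where the $(1,m-1)$-coherence of the edge groups controls the edge columns in the range $q\le m-1$ while the differentials absorb any contribution of a stabiliser that fails to be finitely generated. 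Showing that this cancellation, together with the hypothesis that $H$ is already of type $\sfF_n$, propagates finiteness up to degree $m$ is where the real work lies, and it is exactly calibrated by the gap between the $(0,m-1)$- and $(1,m-1)$-coherence assumptions on the edge groups.
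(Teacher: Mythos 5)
Your overall scheme (restrict to a cocompact subtree, control the edge stabilisers, apply going-down to get the vertex stabilisers of type $\sfF_n$, upgrade by coherence of the vertex groups, then build up) is exactly the paper's, and your $n=1$ case matches the paper's proof verbatim. But for $n\ge 2$ your proposal has a genuine gap at precisely the point you flag as ``where the real work lies''. You correctly identify the crux --- a finitely presented group can act cocompactly on a tree with non-finitely-generated edge stabilisers, and your $\mathrm{BS}(1,2)$ example is apt --- but the proposed resolution via the isotropy spectral sequence is not a proof and could not become one along the lines you sketch. The spectral sequence only computes $H_*(H)$; the hypothesis that $H$ is of type $\sfF_n$ gives no mechanism by which differentials ``absorb'' the contribution of a non-finitely-generated edge stabiliser, and even if some homological bookkeeping succeeded, it would at best yield finiteness of homology of the stabilisers, which is strictly weaker than what the argument needs: to invoke the $(1,m-1)$-coherence of the edge groups you need the subgroup $H_e$ itself to be of type $\sfF_1$, and to invoke the $(n,m)$-coherence of the vertex groups you need $H_v$ itself to be of type $\sfF_n$ (which for $n\ge 2$ includes finite presentability, a homotopical condition not detected by finitely generated homology). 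So the decisive step of the $n\ge 2$ case is missing, and the homological route you propose does not supply it.

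The paper closes this gap not by a finer analysis of the original tree but by \emph{changing the tree}: since $n\ge 2$ makes $H$ finitely presented, a theorem of Guirardel--Levitt (Theorem~\ref{thm:GL}, \cite[Corollary~2.19]{GL}) produces a new cocompact $H$-tree $S$ with finitely generated edge stabilisers, together with an $H$-map $S\to T'$ ensuring that the vertex and edge stabilisers of $S$ are subgroups of those of $T'$, hence of conjugates of the original vertex and edge groups. With $S$ in hand, your own scheme runs verbatim: the finitely generated edge stabilisers of $S$ lie in $(1,m-1)$-coherent groups, so are of type $\sfF_{m-1}$ and in particular of type $\sfF_n$; Theorem~\ref{thm:gog}~\ref{item:vertex} gives the vertex stabilisers type $\sfF_n$; the $(n,m)$-coherence of the vertex groups upgrades them to type $\sfF_m$; and Theorem~\ref{thm:gog}~\ref{item:gog} gives $H$ type $\sfF_m$. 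This accessibility-type input is exactly the single idea your proposal lacks, and it is also what explains the dichotomy in the hypotheses: for $n=1$ one cannot change the tree (no finite presentability), so the edge groups must handle arbitrary subgroups, i.e.\ be $(0,m-1)$-coherent.
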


Theorem~\ref{thm:KS} was crucial to characterise the coherent right-angled Artin groups.
 A simplicial complex is \emph{flag} if every complete subgraph in its 1-skeleton spans a simplex.
Associated to a finite flag simplicial complex~$L$ is the \emph{right-angled Artin group~$A_L$} that has as generating set the vertices of~$L$, subject only to the relations that two generators commute if the corresponding vertices are adjacent in~$L$~\cite{Koberda_survey}.
A graph is \emph{chordal} if it does not contain a cycle of length~$\ge 4$ as a full subgraph.
It is a classical result of Dirac~\cite{Dirac61} that in chordal graphs, minimal separating sets are cliques. 
It follows that a flag simplicial complex with chordal $1$-skeleton is a ``tree of simplices" in the following sense.

\begin{defn}
	We define~$\calT_1$ to be the smallest class of finite flag simplicial complexes that satisfies the following:
	\begin{enumerate}[label=\enum]
		\item All simplices (of all dimensions) lie in~$\calT_1$;
		\item If~$L_1,L_2\in \calT_1$ and~$L_0$ is a full subcomplex of both~$L_1$ and~$L_2$ that is a simplex or is empty, then the gluing~$L_1\cup_{L_0} L_2$ lies in~$\calT_1$.
	\end{enumerate}
\end{defn}

\begin{thm}[Droms~\cite{Droms87}]
\label{thm:Droms}
	Let~$L$ be a finite flag simplicial complex.
	Then the following are equivalent:
	\begin{enumerate}[label=\enum]
		\item The group~$A_L$ is $(1,\infty)$-coherent;		
		\item The group~$A_L$ is $(1,2)$-coherent;
		\item The $1$-skeleton of~$L$ is a chordal graph;
		\item The simplicial complex~$L$ lies in the class~$\calT_1$.
	\end{enumerate}
\end{thm}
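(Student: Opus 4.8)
The plan is to prove the cycle of implications (i) $\Rightarrow$ (ii) $\Rightarrow$ (iii) $\Rightarrow$ (iv) $\Rightarrow$ (i), which yields the full equivalence. The first implication is immediate, since $(1,\infty)$-coherence implies $(1,2)$-coherence by the monotonicity of $(n,m)$-coherence recorded in the introduction. The remaining content splits into a purely combinatorial equivalence (iii) $\Leftrightarrow$ (iv), a group-theoretic implication (iv) $\Rightarrow$ (i) that exploits the new combination theorem, and the implication (ii) $\Rightarrow$ (iii), which I expect to be the only genuine obstruction.

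For the combinatorial core I would argue by induction on the number of vertices of~$L$. For (iii) $\Rightarrow$ (iv): a complete graph is chordal and its flag complex is a simplex, giving the base case; if the $1$-skeleton is disconnected one splits $L$ as a disjoint union and applies the empty gluing; if it is connected but not complete, one chooses a minimal vertex separator~$S$, which is a clique by Dirac's theorem~\cite{Dirac61}, so the full subcomplex~$L_0$ on~$S$ is a simplex, and writing the two sides of the separation as full subcomplexes $L_1,L_2$ (each chordal as an induced subgraph, and each on strictly fewer vertices) exhibits $L=L_1\cup_{L_0}L_2$ in~$\calT_1$ by induction. The converse (iv) $\Rightarrow$ (iii), which is not strictly needed for the cycle but rounds out the equivalence, follows because an induced cycle of length~$\ge 4$ in a gluing $L_1\cup_{L_0}L_2$ can meet the clique~$L_0$ in at most two (necessarily consecutive) vertices, forcing it to lie entirely within~$L_1$ or within~$L_2$.

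The implication (iv) $\Rightarrow$ (i) is where Theorem~\ref{thm:gog_coherence} does its work, and I would prove it by structural induction along the build-up of~$\calT_1$. A simplex on~$k$ vertices is a complete graph, so $A_L\cong\IZ^k$ is virtually poly-cyclic, hence $(0,\infty)$-coherent and in particular $(1,\infty)$-coherent. For a gluing $L=L_1\cup_{L_0}L_2$ along a simplex or empty subcomplex~$L_0$, the defining presentation of~$A_L$ decomposes it as the amalgamated product $A_{L_1}*_{A_{L_0}}A_{L_2}$ (a free product when $L_0=\emptyset$), that is, as the fundamental group of a graph of groups with vertex groups $A_{L_1},A_{L_2}$ and edge group $A_{L_0}\cong\IZ^j$ (or trivial). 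By the inductive hypothesis the vertex groups are $(1,\infty)$-coherent, and the edge group, being virtually poly-cyclic, is $(0,\infty)$-coherent, hence $(0,m-1)$-coherent with $m=\infty$. Theorem~\ref{thm:gog_coherence} applied with $n=1$ then yields that~$A_L$ is $(1,\infty)$-coherent.

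Finally, I would establish (ii) $\Rightarrow$ (iii) in contrapositive form, and this is the step I expect to be the main obstacle, as it requires producing a pathological subgroup rather than combining coherent pieces. If the $1$-skeleton of~$L$ is not chordal, it contains a full subgraph that is a cycle~$C_k$ of length $k\ge 4$; having no chords, $C_k$ spans no triangles, so its flag complex is a hollow cycle, homotopy equivalent to~$S^1$, and $A_{C_k}$ embeds in~$A_L$ as a retract determined by the full subcomplex. Since $C_k$ is connected but not simply connected, Bestvina--Brady Morse theory shows that the kernel of the homomorphism $A_{C_k}\to\IZ$ sending every generator to~$1$ is finitely generated but not finitely presented, hence of type~$\sfF_1$ but not of type~$\sfF_2$. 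Thus $A_{C_k}$, and so $A_L$ by closure of $(1,2)$-coherence under subgroups, fails to be $(1,2)$-coherent; for $k=4$ one may alternatively invoke $A_{C_4}\cong F_2\times F_2$, which is not $(1,2)$-coherent as recorded in the introduction. This closes the cycle and proves the theorem.
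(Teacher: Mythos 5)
Your proof is correct, but it cannot take ``the same approach as the paper'' because the paper offers no proof at all: Theorem~\ref{thm:Droms} is imported wholesale from Droms~\cite{Droms87}, with only the combinatorial equivalence (iii)~$\Leftrightarrow$~(iv) indicated in the surrounding text via Dirac's lemma~\cite{Dirac61} that minimal separators in chordal graphs are cliques. What you have done is reconstruct the theorem from the paper's own machinery, and it is instructive to see how closely your argument shadows the paper's later proofs: your (iv)~$\Rightarrow$~(i) is exactly the specialisation of Theorem~\ref{thm:gog_coherence} with $n=1$, $m=\infty$ and abelian edge groups, which is precisely step~(iv) of the paper's proof of Theorem~\ref{thm:RAAG} (Droms himself only had the Karrass--Solitar combination theorem, Theorem~\ref{thm:KS}, which yields $(1,2)$- rather than $(1,\infty)$-coherence, so your route genuinely explains why the stronger item~(i) belongs in the equivalence); and your (ii)~$\Rightarrow$~(iii) via the kernel of $A_{C_k}\to \IZ$ is exactly the $n=1$ case of the paper's Corollary~\ref{cor:BB}, using Bestvina--Brady~\cite{Bestvina-Brady97}. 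The only historical caveat is that the Bestvina--Brady theorem postdates Droms's 1987 paper, whose original obstruction argument was different; but as a proof your use of it is perfectly legitimate, and your fallback $A_{C_4}\cong F_2\times F_2$ is consistent with the introduction's remark that $(F_2)^k$ is $(n,n+1)$-coherent only for $n\ge k$. Your induction for (iii)~$\Rightarrow$~(iv) is sound, though if you write it up in full you should record the small verifications you elided: a minimal $u$-$v$ separator exists and is nonempty for nonadjacent $u,v$ in a connected graph, the two sides $L_1,L_2$ are full (hence flag and chordal) subcomplexes each with strictly fewer vertices, every simplex of $L$ lies in $L_1$ or $L_2$ because a clique cannot meet two components of the separated graph, and in the converse direction $L_1$ and $L_2$ are full in the gluing $L_1\cup_{L_0}L_2$ so that a full cycle of $L$ restricts to a full cycle of one of the pieces. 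No genuine gap remains.
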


We consider similar classes of simplicial complexes that allow for more vertex spaces and for edge spaces in~$\calT_1$.

\begin{defn}
\label{defn:class}
	Let~$n\in \IN$ with~$n\ge 2$.
	We define~$\calT_n$ to be the smallest class of finite flag simplicial complexes that satisfies the following:
	\begin{enumerate}[label=\enum]
		\item\label{item:simplices}
		All simplices (of all dimensions) lie in~$\calT_n$;
		\item\label{item:dimension}
		All finite flag simplicial complexes of dimension~$\le n-1$ lie in~$\calT_n$;
		\item\label{item:Betti}
		All finite flag simplicial complexes~$L$ of dimension~$n$ with $H_n(L;\IQ)=0$ lie in~$\calT_n$;
		\item\label{item:gluing}
		If~$L_1,L_2\in \calT_n$ and~$L_0$ is a full subcomplex of both~$L_1$ and~$L_2$ that lies in~$\calT_1$ or is empty, then the gluing~$L_1\cup_{L_0} L_2$ lies in~$\calT_n$;
		\item\label{item:cone}
		If~$L\in \calT_n$, then the cone~$L\ast \Delta^0$ lies in~$\calT_n$.
	\end{enumerate}
\end{defn}

As an application of Theorem~\ref{thm:gog_coherence}, we deduce the higher coherence of certain right-angled Artin groups.

\begin{thm}
\label{thm:RAAG}
	Let~$n\in \IN$ with~$n\ge 2$ and let~$L$ be a finite flag simplicial complex.
	If~$L\in \calT_n$, then the right-angled Artin group~$A_L$ is $(n,\infty)$-coherent.
\end{thm}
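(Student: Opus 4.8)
The plan is to argue by structural induction on the class $\calT_n$. Let $\mathcal C$ be the class of finite flag simplicial complexes $L$ for which $A_L$ is $(n,\infty)$-coherent; it suffices to check that $\mathcal C$ is closed under the five operations \ref{item:simplices}--\ref{item:cone} of Definition~\ref{defn:class}, since then minimality of $\calT_n$ gives $\calT_n\subseteq\mathcal C$. I will use repeatedly that $\cd(A_L)=\dim L+1$, that $A_L$ is always of type $\sfF_\infty$, that a full subcomplex $L'\subseteq L$ yields an embedded special subgroup $A_{L'}\hookrightarrow A_L$, and that full-subcomplex gluings and joins of defining complexes correspond respectively to amalgamated products and direct products of the associated RAAGs.

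The first two base cases are immediate. For a simplex $\Delta^k$ the group $A_{\Delta^k}\cong\IZ^{k+1}$ is poly-cyclic, hence $(0,\infty)$-coherent and in particular $(n,\infty)$-coherent, settling \ref{item:simplices}. For \ref{item:dimension}, if $\dim L\le n-1$ then $\cd(A_L)\le n$, so $A_L$ has geometric dimension $\le n$ and is $(n,\infty)$-coherent by the observation in the introduction.

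Two of the closure properties reduce to results already in hand. For the gluing \ref{item:gluing}, writing $L=L_1\cup_{L_0}L_2$ with $L_0$ full in both factors presents $A_L\cong A_{L_1}\ast_{A_{L_0}}A_{L_2}$ as the fundamental group of a graph of groups (a free product if $L_0=\emptyset$); the vertex groups are $(n,\infty)$-coherent by induction, while the edge group $A_{L_0}$ is $(1,\infty)$-coherent by Theorem~\ref{thm:Droms} because $L_0\in\calT_1$, so Theorem~\ref{thm:gog_coherence} (with $m=\infty$, $n\ge 2$) applies. For the cone \ref{item:cone} we have $A_{L\ast\Delta^0}\cong A_L\times\IZ$, and I would isolate the stability statement: \emph{if $G$ is $(n,\infty)$-coherent then so is $G\times\IZ$.} The key is to project a given subgroup $H\le G\times\IZ$ of type $\sfF_n$ along $q\colon G\times\IZ\to G$ rather than onto the $\IZ$-factor. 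The kernel $N=H\cap(\{1\}\times\IZ)$ is central in $H$ and is either trivial or infinite cyclic: if it is trivial then $q$ embeds $H$ into $G$, so $H$ is of type $\sfF_\infty$ by $(n,\infty)$-coherence of $G$; if $N\cong\IZ$, then as $N$ is of type $\sfF_{n-1}$ the image $\overline H=q(H)\le G$ is of type $\sfF_n$, hence of type $\sfF_\infty$, and feeding this into the central extension $1\to N\to H\to\overline H\to 1$ shows $H$ is of type $\sfF_\infty$.

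The case \ref{item:Betti} is the crux and the step I expect to be hardest. Here $\dim L=n$, so $\cd(A_L)=n+1$ and the rational top homology of $A_L$ itself does \emph{not} vanish; rather, the hypothesis $H_n(L;\IQ)=0$ must be leveraged to control arbitrary subgroups. For a subgroup $H\le A_L$ of type $\sfF_n$ one has $\cd(H)\le n+1$, so $(n,\infty)$-coherence is equivalent to upgrading type $\sfFP_n$ to type $\sfFP_{n+1}$. I would attack this through the homological finiteness criteria of Bieri--Eckmann combined with Bestvina--Brady Morse theory, in the spirit of the work of Kochloukova--Vidussi cited in the introduction, using the known description of $H^{\ast}(A_L;\IQ A_L)$ in terms of the reduced rational cohomology of $L$ and of the links of its simplices: the vanishing $H_n(L;\IQ)=0$ is exactly what should kill the top-degree obstruction to type $\sfFP_{n+1}$. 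Carrying this out for \emph{every} type-$\sfF_n$ subgroup, and not merely for special subgroups or for kernels of maps to $\IZ$, is where the genuine difficulty lies.
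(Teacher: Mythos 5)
Your cases (i), (ii), (iv), and (v) are correct and essentially the paper's own argument: (i), (ii), and (iv) coincide with the paper (which likewise uses Theorem~\ref{thm:Droms} and Theorem~\ref{thm:gog_coherence} for the gluing), and in (v) you re-derive by hand the special case $N=\IZ$ of Lemma~\ref{lem:extension_coherence}~\ref{item:extension_coherence}, which the paper invokes directly; your central-extension argument there is sound. But case~\ref{item:Betti}, which you rightly single out as the crux, is not proved: you outline a strategy and explicitly concede that carrying it out for \emph{every} subgroup of type~$\sfF_n$ ``is where the genuine difficulty lies.'' That concession is a genuine gap, and the sketched route is itself doubtful: the Bieri--Eckmann criteria together with the computation of $H^\ast(A_L;\IQ A_L)$ from the reduced cohomology of~$L$ and its links control the ambient group~$A_L$, not an arbitrary subgroup~$H\le A_L$ of type~$\sfF_n$ (group-ring cohomology does not pass to subgroups in any straightforward way), and Bestvina--Brady Morse theory only handles kernels of characters $A_L\to\IZ$, not general~$H$ --- a restriction you acknowledge but do not overcome.

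The missing idea is the $\ell^2$-Betti number criterion of Theorem~\ref{thm:Fisher} (Fisher, Jaikin-Zapirain--Linton): a locally indicable group~$G$ with $\cd_\IZ(G)=n+1$ and $b^{(2)}_{n+1}(G)=0$ is homologically $(n,\infty)$-coherent over~$\IZ$. Right-angled Artin groups are residually torsion-free nilpotent, hence locally indicable; for $\dim L=n$ one has $\cd_\IZ(A_L)=n+1$; and by Davis--Leary, $b^{(2)}_{n+1}(A_L)=\dim_\IQ H_n(L;\IQ)=0$ under the hypothesis of~\ref{item:Betti}. Since $n\ge 2$, a group is of type~$\sfF_n$ if and only if it is of type~$\sfF_2$ and of type~$\sfFP_n(\IZ)$, so homological $(n,\infty)$-coherence over~$\IZ$ upgrades to $(n,\infty)$-coherence. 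The proof of that criterion (via Hughes-free division rings, following \cite{Fisher24,JZL}) is precisely the mechanism by which the vanishing of the top-degree invariant of the ambient group controls \emph{all} of its type-$\sfFP_n$ subgroups simultaneously --- the step your sketch leaves open. Without this input (or a worked-out substitute), your induction fails at the base case~\ref{item:Betti} and the theorem is not established.
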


We do not expect the converse of Theorem~\ref{thm:RAAG} to hold.
This leaves open a characterisation of higher coherence for right-angled Artin groups.

\section{Inheritance of higher coherence}

We recall some well-known inheritance results for finiteness properties~\cite{Bieri81,Brown82,Geoghegan08,Haglund-Wise21}.

\begin{thm}
\label{thm:gog}
	Let~$G$ be the fundamental group of a finite graph of groups and let~$n\in \IN\cup \{\infty\}$.
	The following hold:
	\begin{enumerate}[label=\enum]
		\item\label{item:gog}
		If all vertex groups are of type~$\sfF_n$ and all edge groups are of type~$\sfF_{n-1}$, then~$G$ is of type~$\sfF_n$;
		\item\label{item:vertex}
		If~$G$ is of type~$\sfF_n$ and all edge groups are of type~$\sfF_n$, then all vertex groups are of type~$\sfF_n$.
	\end{enumerate}
\end{thm}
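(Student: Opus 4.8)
The plan is to analyse both parts through the action of~$G$ on the Bass--Serre tree~$T$ associated to the finite graph of groups, and through the resulting graph-of-spaces model for the classifying space~$BG$. Write $V$ and $E$ for the finite vertex and edge sets of the underlying graph, with vertex groups~$G_v$ and edge groups~$G_e$. A model for~$BG$ is built as a tree of spaces: choose classifying spaces $BG_v$ and $BG_e$, form the mapping cylinders of cellular maps $BG_e\to BG_{o(e)}$ and $BG_e\to BG_{t(e)}$ realising the two edge inclusions, and glue. Because each building block is aspherical and the inclusions are $\pi_1$-injective, the universal cover is a tree of contractible spaces over~$T$, hence contractible, so the total space is a model for~$BG$. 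Dually, the contractibility of~$T$ gives the short exact sequence of $\IZ G$-modules
\[
	0 \to \bigoplus_{e\in E} \IZ G \otimes_{\IZ G_e} \IZ \to \bigoplus_{v\in V} \IZ G \otimes_{\IZ G_v} \IZ \to \IZ \to 0
\]
coming from the augmented cellular chain complex of~$T$.

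For part~\ref{item:gog} I would read off the cell count of the tree of spaces. If each $G_v$ is of type~$\sfF_n$ we may take $BG_v$ with finite $n$-skeleton, and if each $G_e$ is of type~$\sfF_{n-1}$ we may take $BG_e$ with finite $(n-1)$-skeleton; then each edge block $BG_e\times[0,1]$ has finite $n$-skeleton, as a $k$-cell of $BG_e$ with $k\le n-1$ contributes cells of dimension $k$ and $k+1\le n$. Since there are only finitely many vertices and edges, and the gluing maps may be taken cellular, the resulting model for~$BG$ has finite $n$-skeleton, so~$G$ is of type~$\sfF_n$. The case $n=\infty$ follows by applying this for all finite~$n$.

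For part~\ref{item:vertex} I would argue homologically. Over~$\IZ G$, each induced module $\IZ G \otimes_{\IZ G_v} \IZ$ is of type~$\sfFP_k$ if and only if~$G_v$ is, because induction from a subgroup is exact and carries finitely generated free modules to finitely generated free modules; likewise a finite direct sum is~$\sfFP_k$ precisely when each summand is. Since~$G$ is of type~$\sfF_n$ it is of type~$\sfFP_n$, so the trivial module~$\IZ$ is~$\sfFP_n$; and since the edge groups are of type~$\sfF_n$, the left-hand term of the exact sequence is~$\sfFP_n$. The standard additivity lemma for~$\sfFP_n$ (in a short exact sequence with both outer terms~$\sfFP_n$, the middle term is~$\sfFP_n$) then shows that $\bigoplus_{v\in V}\IZ G \otimes_{\IZ G_v}\IZ$ is~$\sfFP_n$, whence each~$G_v$ is of type~$\sfFP_n$. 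To upgrade from~$\sfFP_n$ to~$\sfF_n$ when $n\ge 2$, it remains to see that each vertex group is finitely presented, which is available because~$G$ is finitely presented and the edge groups are finitely generated.

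I expect the finite-presentation step in part~\ref{item:vertex} to be the main obstacle: the homological argument delivers the~$\sfFP_n$ property of the vertex groups essentially formally from the exact sequence, but deducing that a vertex group is finitely presented from finite presentability of~$G$ and finite generation of the edge groups requires a genuinely separate argument (for instance a van Kampen or Morse-theoretic analysis of the tree of spaces), not visible from the sequence alone. By contrast, in part~\ref{item:gog} the bookkeeping is straightforward, since the shift by one in the required edge-group finiteness exactly matches the extra interval direction contributed by each edge block.
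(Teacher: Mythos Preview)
The paper does not prove this theorem; it is stated as a well-known inheritance result with references to Bieri, Brown, Geoghegan, and Haglund--Wise, so there is no in-paper argument to compare against. Your approach is exactly the standard one underlying those references.

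Your proof of part~\ref{item:gog} via the tree-of-spaces model is correct and complete: the dimension shift contributed by the interval factor in each edge cylinder is precisely why $\sfF_{n-1}$ on the edges suffices.

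Your proof of part~\ref{item:vertex} is also correct at the $\sfFP_n$ level: the short exact sequence coming from the cellular chains of the Bass--Serre tree is right, the identification of the induced modules with the permutation modules $\IZ[G/G_v]$ and $\IZ[G/G_e]$ is right, and the two-out-of-three lemma you invoke (outer terms $\sfFP_n$ $\Rightarrow$ middle term $\sfFP_n$) is the correct one. Since $\sfF_1=\sfFP_1$ over~$\IZ$, this already settles $n\le 1$.

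You are also right to flag the $\sfF_2$ upgrade as the real content for $n\ge 2$: one needs that a vertex group in a finite graph of groups with finitely presented fundamental group and finitely generated edge groups is itself finitely presented. Your presentation is slightly inconsistent here---you first say this ``is available'' and then call it the main obstacle---but your diagnosis is accurate: it genuinely requires a separate topological argument (a Morse-theoretic or van Kampen analysis of the tree of spaces, or an accessibility-type argument as in the sources the paper cites), and cannot be extracted from the chain-level exact sequence alone. Filling that in, or citing it, would complete your proof.
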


By Bass--Serre theory, there is a correspondence between graphs of groups and groups acting on trees (without inversions), under which vertex and edge groups correspond to vertex and edge stabilisers, respectively, of orbit representatives.

\begin{thm}[{\cite[Corollary~2.19]{GL}}]
\label{thm:GL}
	Let~$G$ be a group and let~$T$ be a cocompact $G$-tree.
	If~$G$ is finitely presented, then there exists a cocompact $G$-tree~$S$ with finitely generated edge stabilisers together with a $G$-map~$S\to T$ that sends vertices to vertices and edges to non-trivial edge paths.
	In particular, the vertex and edge stabilisers of~$S$ are subgroups of vertex and edge stabilisers of~$T$, respectively.
\end{thm}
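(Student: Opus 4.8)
The plan is to follow the track-resolution method (in the spirit of Dunwoody), which is the route taken by Guirardel--Levitt. The finite presentation of~$G$ enters through the presentation $2$-complex, whose universal cover~$X$ is a simply connected $2$-dimensional $G$-CW-complex on which~$G$ acts freely and cocompactly. Simple connectivity of~$X$ is what ultimately forces the dual object of a pattern to be a \emph{tree}, while the free cocompact action is exactly what will make the edge stabilisers finitely generated.

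First I would build a $G$-equivariant map $f\colon X\to T$. Choosing one vertex in each $G$-orbit of~$X^{(0)}$ and sending it to an arbitrary vertex of~$T$ determines~$f$ on~$X^{(0)}$ equivariantly; since~$T$ is uniquely geodesic, each edge of~$X$ extends over the geodesic edge-path joining the images of its endpoints; and since~$T$ is a tree every loop bounds, so each $2$-cell can be filled, all done on orbit representatives and extended equivariantly. After an equivariant subdivision I would put~$f$ in general position with respect to the midpoints of the edges of~$T$, so that $Y\coloneqq f^{-1}(\{\text{midpoints}\})$ is a $G$-invariant pattern of tracks in~$X$.

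The tree~$S$ is then the dual tree of the pattern~$Y$: its vertices are the components (chambers) of $X\smallsetminus Y$ and its edges are the tracks, a chamber being incident to a track when the track lies in its frontier. Because~$X$ is simply connected this dual object is a tree, and because $X/G$ is a finite complex the pattern~$Y$ has only finitely many tracks modulo~$G$, so~$S$ is cocompact. The map~$\phi\colon S\to T$ is read off from~$f$: each track maps to a single midpoint and hence corresponds to an edge~$\bar e$ of~$T$, giving the image of the edge of~$S$; each chamber maps into the open star of a unique vertex~$v$ of~$T$ (its image avoids all midpoints), giving the image of the vertex of~$S$. This~$\phi$ is $G$-equivariant, sends vertices to vertices, and sends each edge across the single edge~$\bar e$ --- a non-trivial edge-path --- since the two chambers adjacent to a track lie on opposite sides of the corresponding midpoint.

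It remains to check the stabilisers. An element stabilising a track~$t$ (an edge of~$S$) fixes its two endpoints without inversion, hence fixes their images~$v_C,v_{C'}$ in~$T$ and therefore the edge~$\bar e$; so the edge stabiliser~$G_t$ embeds into an edge stabiliser of~$T$, and likewise vertex stabilisers embed into vertex stabilisers of~$T$, giving the ``in particular'' clause. Finally, finite generation of~$G_t$ is the crux: since~$G$ acts freely on~$X$, the group~$G_t$ acts freely on the connected graph~$t$, and the quotient~$t/G_t$ is a subcomplex of the finite complex~$X/G$, hence a finite graph; as the covering $t\to t/G_t$ is regular with deck group~$G_t$, the latter is a quotient of the finitely generated free group~$\pi_1(t/G_t)$ and is therefore finitely generated. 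I expect the main obstacle to be the careful bookkeeping of the equivariant general-position step --- arranging that~$Y$ is a genuine pattern of tracks and that no edge of~$S$ degenerates under~$\phi$ --- rather than any of the group-theoretic conclusions.
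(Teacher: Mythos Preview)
The paper does not prove this statement; it is quoted from \cite[Corollary~2.19]{GL} and used as a black box in the proof of Theorem~\ref{thm:gog_coherence}. Your sketch is exactly the Dunwoody track-resolution argument that underlies Guirardel--Levitt's result, and the outline is correct: the presentation $2$-complex supplies a simply connected free cocompact $G$-complex~$X$, the equivariant map $X\to T$ in general position yields a finite pattern of tracks whose dual is the cocompact $G$-tree~$S$, and the free action of each edge stabiliser~$G_t$ on the compact-quotient graph~$t$ forces~$G_t$ to be finitely generated as you argue. There is nothing further to compare against in the paper itself.
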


\begin{thm}
\label{thm:extension}
	Let~$1\to N\to G\to Q\to 1$ be a group extension and let~$n\in \IN\cup \{\infty\}$.
	The following hold:
	\begin{enumerate}[label=\enum]
		\item\label{item:extension}
		If~$Q$ is of type~$\sfF_n$ and~$N$ is of type~$\sfF_n$, then~$G$ is of type~$\sfF_n$;
		\item\label{item:quotient}
		If~$G$ is of type~$\sfF_n$ and~$N$ is of type~$\sfF_{n-1}$, then~$Q$ is of type~$\sfF_n$.
	\end{enumerate}
\end{thm}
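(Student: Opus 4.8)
The plan is to treat the two assertions by complementary methods: the first is most transparent from the fibration of classifying spaces associated with the extension, while the second is homological and rests on the Lyndon--Hochschild--Serre spectral sequence. Throughout I use the standard facts that for~$n\ge 2$ a group is of type~$\sfF_n$ if and only if it is finitely presented and of type~$\sfFP_n$, that type~$\sfF_1$ and type~$\sfFP_1$ both coincide with finite generation, and that the case~$n=\infty$ reduces to all finite~$n$.

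For the first statement, I would realise~$BG$ as the total space of a fibration~$BN\to BG\to BQ$ coming from the extension. Choosing CW models for~$BQ$ and~$BN$ with finite $n$-skeleta, which exist because~$Q$ and~$N$ are of type~$\sfF_n$, I would work over the finite $n$-skeleton of~$BQ$ and use that the fibration is fibre-homotopy trivial over each contractible cell. This yields a CW model for~$BG$ whose $d$-cells are indexed by pairs consisting of a $p$-cell of~$BQ$ and a $q$-cell of~$BN$ with~$p+q=d$; as each factor contributes only finitely many cells in every degree~$\le n$, the model for~$BG$ has finite $n$-skeleton, so~$G$ is of type~$\sfF_n$. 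The only delicate point is the passage from a fibration to an honest CW model carrying this product cell structure, which is standard.

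For the second statement, finite generation of~$Q$ is immediate since~$Q$ is a quotient of the finitely generated group~$G$; this already settles~$n=1$. For~$n\ge 2$, the subgroup~$N$ is of type~$\sfF_{n-1}\subseteq \sfF_1$, hence finitely generated, and adjoining its finitely many generators as relators to a finite presentation of~$G$ presents the quotient~$Q$, so~$Q$ is finitely presented. It remains to show that~$Q$ is of type~$\sfFP_n$. For this I would run the homology spectral sequence~$E^2_{pq}=H_p(Q;H_q(N;M))\Rightarrow H_{p+q}(G;M)$ for~$\IZ G$-modules~$M$ and apply the Bieri--Eckmann criterion, which detects type~$\sfFP_n$ through the behaviour of homology on directed systems of coefficient modules. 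The hypothesis that~$N$ is of type~$\sfFP_{n-1}$ controls the terms~$E^2_{pq}$ with~$q\le n-1$, the hypothesis that~$G$ is of type~$\sfFP_n$ controls the abutment, and an induction on~$n$ isolates the corner term~$E^2_{n,0}=H_n(Q;M)$.

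The main obstacle is exactly this last step: a Lyndon--Hochschild--Serre spectral sequence naturally deduces finiteness of the total group from that of the base and fibre, whereas here one must argue in the opposite direction and solve for the base. Making this precise requires tracking which~$E^2$-terms are already known to be finite and arranging the induction so that at stage~$n$ the only possibly uncontrolled contribution to~$H_n(G;M)$ is~$E^2_{n,0}$, whose finiteness is precisely the assertion that~$Q$ is of type~$\sfFP_n$. Combining finite generation, finite presentation, and type~$\sfFP_n$ then gives type~$\sfF_n$ for~$Q$, and taking all finite~$n$ handles~$n=\infty$.
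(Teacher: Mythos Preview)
The paper does not actually prove this theorem: it is stated as a well-known inheritance result with references to Bieri, Brown, Geoghegan, and Haglund--Wise, and no argument is given. So there is no ``paper's own proof'' to compare against.

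That said, your outline follows exactly the standard arguments found in those references. For part~(i), building a CW model for~$BG$ cell by cell over a finite-skeleton model for~$BQ$ with fibres modelled on a finite-skeleton~$BN$ is the usual proof (see, e.g., Geoghegan's book). For part~(ii), your reduction to the~$\sfFP_n$ statement via finite presentation of~$Q$ is correct, and the use of the Lyndon--Hochschild--Serre spectral sequence together with the Bieri--Eckmann direct-limit criterion is precisely the argument in Bieri's Queen Mary notes. You are right that the ``backwards'' direction---deducing~$\sfFP_n$ for~$Q$ from information about~$G$ and~$N$---is the delicate one; the induction you describe, isolating the corner term~$E^2_{n,0}$, is how it is done there. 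Your sketch is a faithful summary of the literature proof, and since the paper only cites the result, there is nothing further to compare.
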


These inheritance results for finiteness properties give rise to corresponding inheritance results for higher coherence.

\begin{proof}[Proof of Theorem~\ref{thm:gog_coherence}]
	Let~$H$ be a subgroup of~$G$ that is of type~$\sfF_n$.
	We show that~$H$ is of type~$\sfF_{m}$.
	Consider the Bass--Serre tree~$T$ associated to~$G$ as an $H$-tree by restriction.
	Since the stabilisers of the $H$-tree~$T$ are subgroups of the stabilisers of the $G$-tree~$T$, they inherit the corresponding coherence properties.
	Since~$H$ is finitely generated, there exists a cocompact $H$-subtree~$T'$ of~$T$.
	In particular, the vertex and edge stabilisers of the $H$-tree~$T'$ are subgroups of vertex and edge stabilisers of the $H$-tree~$T$, respectively.
	
	We treat the two cases $n=1$ and $n\ge 2$ separately.
	First, let~$n=1$.
	Since the edge stabilisers of~$T$ are $(0,m-1)$-coherent, the edge stabilisers of~$T'$ are of type~$\sfF_{m-1}$ and, in particular, of type~$\sfF_1$.
	By Theorem~\ref{thm:gog}~\ref{item:vertex}, the vertex stabilisers of~$T'$ are of type~$\sfF_1$.
	Since the vertex stabilisers of~$T$ are $(1,m)$-coherent, the vertex stabilisers of~$T'$ are of type~$\sfF_m$.
	By Theorem~\ref{thm:gog}~\ref{item:gog}, it follows that~$H$ is of type~$\sfF_m$.
	
	Second, let~$n\ge 2$.
	By Theorem~\ref{thm:GL}, there exists a cocompact $H$-tree~$S$ whose edge stabilisers are of type~$\sfF_1$ and whose vertex and edge stabilisers are subgroups of vertex and edge stabilisers of~$T'$, respectively.
	 Since the edge stabilisers of~$T$ are $(1,m-1)$-coherent, the finitely generated edge stabilisers of~$S$ are of type~$\sfF_{m-1}$ and, in particular, of type~$\sfF_n$.
	 By Theorem~\ref{thm:gog}~\ref{item:vertex}, the vertex stabilisers of~$S$ are of type~$\sfF_n$.
	Since the vertex stabilisers of~$T$ are $(n,m)$-coherent, the vertex stabilisers of~$S$ are of type~$\sfF_m$.
	By Theorem~\ref{thm:gog}~\ref{item:gog}, it follows that~$H$ is of type~$\sfF_m$.
\end{proof}

\begin{lem}
\label{lem:extension_coherence}
	Let~$1\to N\to G\to Q\to 1$ be a group extension and let~$n,m\in \IN\cup \{\infty\}$ with~$n<m$.
	The following hold:
	\begin{enumerate}[label=\enum]
		\item\label{item:extension_coherence}
		If~$Q$ is $(n,m)$-coherent and~$N$ is $(0,m)$-coherent, then~$G$ is $(n,m)$-coherent;
		\item If~$G$ is $(n,m)$-coherent and~$N$ is of type~$\sfF_{m-1}$, then~$Q$ is $(n,m)$-coherent.
	\end{enumerate}
	\begin{proof}
		Let~$p\colon G\to Q$ denote the quotient map.
		For part~(i), let~$H$ be a subgroup of~$G$ that is of type~$\sfF_n$.
		Consider the group extension 
		\[
			1\to N\cap H\to H\to p(H)\to 1.
		\]
		Since~$N$ is $(0,m)$-coherent, the group~$N\cap H$ is of type~$\sfF_m$ and, in particular, of type~$\sfF_{n-1}$.
		By Theorem~\ref{thm:extension}~\ref{item:quotient}, the group~$p(H)$ is of type~$\sfF_n$.
		Since~$Q$ is $(n,m)$-coherent, the group~$p(H)$ is of type~$\sfF_m$.
		By Theorem~\ref{thm:extension}~\ref{item:extension}, it follows that~$H$ is of type~$\sfF_m$.

		For part~(ii), let~$K$ be a subgroup of~$Q$ that is of type~$\sfF_n$.
		Similarly, consider the group extension $1\to N\to p^{-1}(K)\to K\to 1$ and apply Theorem~\ref{thm:extension}~\ref{item:extension} and~\ref{item:quotient}.
	\end{proof}
\end{lem}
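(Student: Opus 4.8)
The plan is to transfer each coherence hypothesis along the canonical short exact sequences attached to a subgroup and its interaction with the kernel~$N$, using the two finiteness-property inheritance statements of Theorem~\ref{thm:extension} as the engine. Throughout I would invoke two elementary facts: that every group is of type~$\sfF_0$, and that type~$\sfF_k$ implies type~$\sfF_j$ for all $j\le k$. The inequality $n<m$ (together with the convention $\infty-1=\infty$) supplies the numerical implications $n-1<m$ and $n\le m-1$ that I will need to feed the correct degrees into Theorem~\ref{thm:extension}.

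For part~(i), let $p\colon G\to Q$ be the quotient map and let $H\le G$ be of type~$\sfF_n$; the goal is to promote it to type~$\sfF_m$. The sequence I would exploit is
\[
    1\to N\cap H\to H\to p(H)\to 1,
\]
chosen because $N\cap H$ is a subgroup of~$N$ and $p(H)$ is a subgroup of~$Q$, so each inherits the relevant coherence hypothesis. First I would observe that $N\cap H$, as a subgroup of~$N$, is of type~$\sfF_0$ and hence of type~$\sfF_m$ by the $(0,m)$-coherence of~$N$; in particular it is of type~$\sfF_{n-1}$. Feeding $H$ (type~$\sfF_n$) and $N\cap H$ (type~$\sfF_{n-1}$) into Theorem~\ref{thm:extension}~\ref{item:quotient} then gives that $p(H)$ is of type~$\sfF_n$, and the $(n,m)$-coherence of~$Q$ upgrades it to type~$\sfF_m$. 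Finally, with $N\cap H$ and $p(H)$ both of type~$\sfF_m$, Theorem~\ref{thm:extension}~\ref{item:extension} yields that $H$ is of type~$\sfF_m$.

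For part~(ii), let $K\le Q$ be of type~$\sfF_n$; here I would pull back along~$p$ to form
\[
    1\to N\to p^{-1}(K)\to K\to 1,
\]
and let the single hypothesis that~$N$ is of type~$\sfF_{m-1}$ do double duty. Since $n\le m-1$, the kernel~$N$ is of type~$\sfF_n$, so combining $K$ (type~$\sfF_n$) and $N$ (type~$\sfF_n$) via Theorem~\ref{thm:extension}~\ref{item:extension} shows that $p^{-1}(K)$ is of type~$\sfF_n$; as a subgroup of the $(n,m)$-coherent group~$G$, it is then of type~$\sfF_m$. Using instead that~$N$ is of type~$\sfF_{m-1}$, Theorem~\ref{thm:extension}~\ref{item:quotient} applied to $p^{-1}(K)$ (type~$\sfF_m$) and~$N$ (type~$\sfF_{m-1}$) returns that~$K$ is of type~$\sfF_m$.

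The argument is essentially bookkeeping, so I do not anticipate a serious obstacle; the only point requiring care is matching the finiteness degrees to the two halves of Theorem~\ref{thm:extension}. The asymmetry in part~(i)—needing $N\cap H$ at the full level~$m$ to reconstruct~$H$ but only at level~$n-1$ to descend to~$p(H)$—is exactly what forces the strong $(0,m)$-coherence assumption on~$N$ rather than a weaker finiteness-type hypothesis. In part~(ii), the reason a single hypothesis on~$N$ suffices is that the two applications of Theorem~\ref{thm:extension} demand~$N$ only at levels~$n$ and~$m-1$, both of which are bounded above by~$m-1$.
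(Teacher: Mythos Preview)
Your proof is correct and follows exactly the same approach as the paper: for part~(i) you use the induced extension $1\to N\cap H\to H\to p(H)\to 1$ and apply Theorem~\ref{thm:extension} in the same order, and for part~(ii) you pull back to $1\to N\to p^{-1}(K)\to K\to 1$ and spell out the two applications of Theorem~\ref{thm:extension} that the paper leaves implicit under the word ``Similarly''.
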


\begin{rem}
	In Theorem~\ref{thm:gog_coherence} for~$n=1$, instead of assuming that the edge groups are $(0,m-1)$-coherent, it suffices that the intersection of any subgroup of~$G$ that is of type~$\sfF_n$ with any edge group is of type~$\sfF_{m-1}$.
	Similarly, in Lemma~\ref{lem:extension_coherence}~\ref{item:extension_coherence}, instead of assuming that~$N$ is $(0,m)$-coherent, it suffices that the intersection of any subgroup of~$G$ that is of type~$\sfF_n$ with~$N$ is of type~$\sfF_m$.
\end{rem}

\section{Higher homological coherence}

There is an analogous notion of higher homological coherence~\cite{Kochloukova-Vidussi23, Fisher24}.
	Let~$R$ be a ring and let~$n\in \IN$.
	A group~$G$ is \emph{of type~$\sfFP_n(R)$} (resp.\ of type~$\sfFP_\infty(R)$) if the trivial $RG$-module~$R$ admits a projective resolution that is finitely generated in degrees~$\le n$ (resp.\ in all degrees).
	
\begin{defn}	
	Let~$R$ be a ring and let~$n,m\in \IN\cup \{\infty\}$ with~$n<m$.
	We say that a group~$G$ is \emph{homologically $(n,m)$-coherent over~$R$} if every subgroup of~$G$ that is of type~$\sfFP_n(R)$ is of type~$\sfFP_m(R)$.
\end{defn}	

	For~$n\ge 2$, homological $(n,m)$-coherence over~$\IZ$ implies $(n,m)$-coherence because for~$n\ge 2$, a group is of type~$\sfF_n$ if and only if it is of type~$\sfF_2$ and of type~$\sfFP_n(\IZ)$.
	Inheritance results similar to Theorem~\ref{thm:gog_coherence} and Lemma~\ref{lem:extension_coherence} hold also for higher homological coherence.
	
	Groups of cohomological dimension~$n$ over~$R$ are homologically $(n,\infty)$-coherent over~$R$~\cite[Proposition~VIII.6.1]{Brown82}.
	Over the ring~$\IZ$ (and suitably also over fields), this can be improved if the top-dimensional $\ell^2$-Betti number vanishes.

\begin{thm}[Fisher, Jaikin-Zapirain--Linton]
\label{thm:Fisher}
	Let~$n\in \IN$ with~$n\ge 1$ and
	let~$G$ be a locally indicable group with~$\cd_\IZ(G)=n$.
	If~$b^{(2)}_n(G)=0$,
	then~$G$ is homologically $(n-1,\infty)$-coherent over~$\IZ$.
	\begin{proof}
		The proof is analogous to that of~\cite[Proposition~2.3]{Fisher24}, see also~\cite[Theorem~3.10]{JZL}.
	\end{proof}
\end{thm}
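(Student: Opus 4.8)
The plan is to reduce everything to a single finiteness jump. Let $H\le G$ be of type $\sfFP_{n-1}(\IZ)$; I want to show $H$ is of type $\sfFP_\infty(\IZ)$. As a subgroup, $\cd_\IZ(H)\le \cd_\IZ(G)=n$, and a group of cohomological dimension $\le n$ that is of type $\sfFP_n(\IZ)$ is automatically of type $\sfFP_\infty(\IZ)$ by \cite[Proposition~VIII.6.1]{Brown82}. So it suffices to promote $H$ from type $\sfFP_{n-1}$ to type $\sfFP_n$. I detect this homologically: choosing a free resolution $C_\bullet\to \IZ$ over $\IZ H$ with $C_0,\dots,C_{n-1}$ finitely generated and letting $Z=\ker(C_{n-1}\to C_{n-2})$ be the $(n-1)$-st syzygy, the group $H$ is of type $\sfFP_n(\IZ)$ precisely when $Z$ is finitely generated. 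The bound $\cd_\IZ(H)\le n$ moreover forces $\operatorname{pd}_{\IZ H}(Z)\le 1$, so $Z$ is a submodule of a finitely generated free module admitting a length-one projective resolution; this $Z$ is the object I must show is finitely generated.

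The tool is the Hughes-free division ring. Since $G$ is locally indicable, so is $H$, and hence $\IQ H$ embeds into its Hughes-free division ring $\mathcal{D}_H$, compatibly with an inclusion $\mathcal{D}_H\subseteq \mathcal{D}_G$ of division rings; over such a ring every module is free of a well-defined dimension, and the associated agrarian Betti numbers $\dim_\mathcal{D}H_k(-;\mathcal{D})$ coincide with the $\ell^2$-Betti numbers $b^{(2)}_k$ (the content imported from \cite{JZL}). A dimension shift through the finitely generated free modules in low degrees identifies the obstruction to finite generation with $\operatorname{Tor}_1^{\IZ H}(\mathcal{D}_H,Z)\cong H_n(H;\mathcal{D}_H)$, a $\mathcal{D}_H$-vector space of dimension $b^{(2)}_n(H)$; note also that $Z\hookrightarrow C_{n-1}$ forces $\dim_{\mathcal{D}_H}(\mathcal{D}_H\otimes_{\IZ H}Z)<\infty$.

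Two steps then remain. First, the vanishing of the top $\ell^2$-Betti number has to travel along the inclusion $H\le G$. Here the base change $H_n(H;\mathcal{D}_G)\cong \mathcal{D}_G\otimes_{\mathcal{D}_H}H_n(H;\mathcal{D}_H)$, valid because $\mathcal{D}_G$ is free over $\mathcal{D}_H$, gives $b^{(2)}_n(H)=\dim_{\mathcal{D}_G}H_n(H;\mathcal{D}_G)$, and one propagates $H_n(G;\mathcal{D}_G)=0$ to $H_n(H;\mathcal{D}_G)=0$; crucially this uses the top-dimensionality $\cd_\IZ(G)=n$ rather than any naive monotonicity, since $\ell^2$-Betti numbers are not monotone under passage to subgroups. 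Second, once $b^{(2)}_n(H)=0$, i.e.\ $\operatorname{Tor}_1^{\IZ H}(\mathcal{D}_H,Z)=0$, one upgrades the finiteness of $\dim_{\mathcal{D}_H}(\mathcal{D}_H\otimes_{\IZ H}Z)$ to honest finite generation of $Z$ over $\IZ H$: the Hughes-free embedding supplies a Sylvester rank function under which a projective-dimension-one submodule of a finitely generated free $\IZ H$-module with vanishing top $\operatorname{Tor}$ against $\mathcal{D}_H$ is forced to be finitely generated.

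I expect the second step to be the main obstacle, and the reason is that $\IZ H$ is in general far from Noetherian, so finite generation of $Z$ cannot be read off from its being the kernel of a map between finitely generated modules; the bridge from the vanishing agrarian invariant to actual finite generation is exactly the technical heart of the cited results. I would therefore import the finite-generation criterion for projective-dimension-one modules over Hughes-free group rings, together with the propagation of top-degree $\ell^2$-acyclicity, directly from \cite[Proposition~2.3]{Fisher24} and \cite[Theorem~3.10]{JZL}, rather than reproving them here.
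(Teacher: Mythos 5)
Your overall architecture is exactly that of the paper's proof, which consists precisely of deferring to \cite[Proposition~2.3]{Fisher24} and \cite[Theorem~3.10]{JZL}: reduce to the single jump $\sfFP_{n-1}\Rightarrow \sfFP_n$ (with $\sfFP_n\Rightarrow\sfFP_\infty$ free from $\cd_\IZ(H)\le n$ via \cite[Proposition~VIII.6.1]{Brown82}), detect the jump on the syzygy, use the Hughes-free division ring of a locally indicable group to identify $\ell^2$-Betti numbers with division-ring dimensions, propagate top-degree vanishing to $H$ using $\cd_\IZ(G)=n$, and import the finite-generation criterion from the cited results. Since the paper gives no further details, your sketch is, if anything, more informative than the paper's proof, and your closing decision to quote the two technical ingredients rather than reprove them is exactly what the authors do.

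One piece of your homological bookkeeping is off by one, and you should fix it before relying on it. Since $\cd_\IZ(H)\le n$ and $C_0,\dots,C_{n-1}$ are free, the syzygy $Z=\ker(C_{n-1}\to C_{n-2})$ is not merely of projective dimension $\le 1$: it is \emph{projective} (Brown, VIII.2.1). Consequently $\operatorname{Tor}_1^{\IZ H}(\mathcal{D}_H,Z)=0$ holds identically and cannot be the obstruction you are tracking; your identification $\operatorname{Tor}_1^{\IZ H}(\mathcal{D}_H,Z)\cong H_n(H;\mathcal{D}_H)$ is wrong as stated. The correct statement, obtained by dimension-shifting down to the \emph{previous} kernel $K=\ker(C_{n-2}\to C_{n-3})$, is $H_n(H;\mathcal{D}_H)\cong \operatorname{Tor}_1^{\IZ H}(\mathcal{D}_H,K)\cong \ker\bigl(\mathcal{D}_H\otimes_{\IZ H}Z\to \mathcal{D}_H\otimes_{\IZ H}C_{n-1}\bigr)$. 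This also invalidates your parenthetical claim that $Z\hookrightarrow C_{n-1}$ alone forces $\dim_{\mathcal{D}_H}(\mathcal{D}_H\otimes_{\IZ H}Z)<\infty$: tensoring is not left exact, and the kernel of the induced map is precisely $H_n(H;\mathcal{D}_H)$, i.e.\ precisely $b^{(2)}_n(H)$. The logical order must therefore be: first propagate $b^{(2)}_n(H)=0$ from $b^{(2)}_n(G)=0$ using $\cd_\IZ(G)=n$ (as you correctly anticipate, this is where top-dimensionality enters, since $\ell^2$-Betti numbers are not monotone in subgroups); this makes $\mathcal{D}_H\otimes_{\IZ H}Z\to \mathcal{D}_H\otimes_{\IZ H}C_{n-1}$ injective, whence $\dim_{\mathcal{D}_H}(\mathcal{D}_H\otimes_{\IZ H}Z)<\infty$; and only then does the imported criterion (a projective $\IZ H$-module of finite $\mathcal{D}_H$-dimension is finitely generated) apply to conclude. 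With that reordering your proposal matches the intended argument.
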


\section{Higher coherence of some right-angled Artin groups}

We apply the inheritance results above to deduce the higher coherence of certain right-angled Artin groups.

\begin{proof}[Proof of Theorem~\ref{thm:RAAG}]
	We show that $(n,\infty)$-coherence is satisfied by the right-angled Artin groups associated to the flag simplicial complexes in Definition~\ref{defn:class}~\ref{item:simplices}, \ref{item:dimension}, and~\ref{item:Betti} and is preserved under the operations in Definition~\ref{defn:class}~\ref{item:gluing} and~\ref{item:cone}.
	
	(i) If~$L$ is a simplex, then the group~$A_L$ is finitely generated free abelian and hence $(0,\infty)$-coherent and, in particular, $(n,\infty)$-coherent.
	
	(ii) If~$L$ is of dimension~$\le n-1$, then the group~$A_L$ is of geometric dimension~$\le n$ and hence $(n,\infty)$-coherent~\cite[Proposition~7.2.13]{Geoghegan08}.
	
	(iii) If~$L$ is of dimension~$n$ with~$H_n(L;\IQ)=0$, then the group~$A_L$ satisfies $\cd_\IZ(A_L)=n+1$ and $b^{(2)}_{n+1}(A_L)= \dim_\IQ H_n(L;\IQ)=0$ by~\cite{Davis-Leary03}.
	By Theorem~\ref{thm:Fisher}, the group~$A_L$ is homologically $(n,\infty)$-coherent over~$\IZ$  and, since~$n\ge 2$, hence $(n,\infty)$-coherent.
	
	(iv) For a gluing~$L\coloneqq L_1\cup_{L_0} L_2$ with~$L_1,L_2\in \calT_n$ and~$L_0\in \calT_1$ or~$L_0=\emptyset$, the group~$A_L$ splits as an amalgamated product~$A_{L_1}\ast_{A_{L_0}} A_{L_2}$.
	(Here~$A_{\emptyset}$ is the trivial group.)
	The group~$A_{L_0}$ is $(1,\infty)$-coherent by Theorem~\ref{thm:Droms}.
	By induction on the number of vertices, we may assume that the groups~$A_{L_1}$ and~$A_{L_2}$ are $(n,\infty)$-coherent.
	Then the group~$A_L$ is $(n,\infty)$-coherent by Theorem~\ref{thm:gog_coherence}.
	
	(v) For a cone~$CL\coloneqq L\ast \Delta^0$ with~$L\in \calT_n$, the group~$A_{CL}$ splits as a direct product~$A_L\times \IZ$.
	By induction on the number of vertices, we may assume that the group~$A_L$ is $(n,\infty)$-coherent.
	Then the group~$A_{CL}$ is $(n,\infty)$-coherent by Lemma~\ref{lem:extension_coherence}~\ref{item:extension_coherence}.
\end{proof}

We note that if a right-angled Artin group~$A_L$ splits as an amalgamated product over a non-trivial $(1,2)$-coherent group, then there exists a separating full subcomplex of~$L$ that lies in~$\calT_1$~\cite{Hull21}.

\begin{rem}
	Right-angled Artin groups are so-called graph products of~$\IZ$.
	Theorem~\ref{thm:RAAG} holds more generally for graph products of $(0,\infty)$-coherent groups when Definition~\ref{defn:class}~\ref{item:dimension} is modified suitably.
	The $(1,2)$-coherence of graph products was studied in~\cite{Varghese19}.
	The $\ell^2$-Betti numbers and agrarian Betti numbers of graph products were computed in~\cite{DO12,FHL24}.
\end{rem}

Right-angled Artin groups can have subgroups with exotic finiteness properties, which are obvious obstructions to higher coherence.
Let~$L$ be a finite flag simplicial complex.
The \emph{Bestvina--Brady group~$H_L$} is defined as the kernel of the group homomorphism~$A_L\to \IZ$ that sends every standard generator of~$A_L$ to~$1\in \IZ$.
The finiteness properties of the group~$H_L$ are determined by the higher connectivity of~$L$ as follows.

\begin{thm}[Bestvina--Brady~\cite{Bestvina-Brady97}]
	Let~$L$ be a finite flag simplicial complex and let~$n\in \IN$ with~$n\ge 1$.
	Then the group~$H_L$ is of type~$\sfF_n$ if and only if~$L$ is $(n-1)$-connected.
\end{thm}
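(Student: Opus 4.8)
The plan is to run Bestvina--Brady discrete Morse theory on the universal cover of the Salvetti complex. First I would recall that the Salvetti complex~$S_L$ is a compact nonpositively curved cube complex with~$\pi_1(S_L)\cong A_L$, so its universal cover~$X\coloneqq \widetilde{S_L}$ is a contractible CAT(0) cube complex carrying a free cocompact $A_L$-action. Fixing a base vertex identifies the $0$-skeleton of~$X$ with~$A_L$, and the homomorphism~$\phi\colon A_L\to \IZ$, viewed as a map on the $0$-skeleton, then extends to a Morse function~$f\colon X\to \mathbb{R}$ that is affine on each cube. Because~$\phi$ sends every standard generator to~$1$, the function~$f$ is non-constant on edges and restricts to a unique maximum and minimum on each cube, so it is a genuine Morse function in the sense of Bestvina--Brady. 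Since~$H_L=\ker \phi$ preserves the level sets of~$f$ while~$A_L/H_L\cong \IZ$ acts by unit translation on~$f$-values, the group~$H_L$ acts freely on~$X$ and cocompactly on each bounded slab~$X_{[a,b]}\coloneqq f^{-1}([a,b])$.

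The heart of the argument is the local computation of the ascending and descending links. At every vertex~$v$ of~$X$ the link is the ``octahedralization'' of~$L$ (two antipodal vertices for each vertex of~$L$, with simplices recording a sign on each vertex of a simplex of~$L$), and separating the edge-directions along which~$f$ increases from those along which it decreases shows that both the ascending link~$\mathrm{lk}_\uparrow(v)$ and the descending link~$\mathrm{lk}_\downarrow(v)$ are isomorphic to~$L$ itself. This is where the topology of~$L$ enters as the local obstruction governing how the sublevel and superlevel filtrations evolve.

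With the links identified, I would invoke the Bestvina--Brady Morse Lemma: as the parameter~$t$ crosses a critical value, the slab~$X_{[-t,t]}$ changes up to homotopy by coning off copies of the ascending and descending links, that is, copies of~$L$. If~$L$ is~$(n-1)$-connected, then coning off a copy of~$L$ changes homotopy only in degrees~$\ge n$, so each inclusion~$X_{[-t,t]}\hookrightarrow X_{[-t',t']}$ induces isomorphisms on~$\pi_k$ for~$k<n$; since~$X$ is contractible and equals the increasing union of these slabs, each slab is itself~$(n-1)$-connected. The filtration of~$X$ by the $H_L$-cocompact subcomplexes~$X_{[-t,t]}$ is thus essentially~$(n-1)$-connected, and Brown's criterion yields that~$H_L$ is of type~$\sfF_n$. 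For the converse I would argue contrapositively: if~$L$ is not~$(n-1)$-connected, then some class in~$\pi_k(L)$ or~$\widetilde{H}_k(L)$ with~$k\le n-1$ is detected by the descending links and cannot be resolved by the Morse moves, obstructing essential~$(n-1)$-connectivity and, again by Brown's criterion, showing that~$H_L$ fails to be of type~$\sfF_n$.

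The main obstacle I expect is the converse (non-finiteness) direction together with the careful bookkeeping of homotopy rather than merely homology. Showing that a surviving class genuinely obstructs finiteness in the limit requires Morse-theoretic control of the connectivity of the pairs~$(X_{[-t',t']},X_{[-t,t]})$, and upgrading from the homological statement (type~$\sfFP_n$, detecting~$(n-1)$-acyclicity) to the homotopical one (type~$\sfF_n$, detecting~$(n-1)$-connectivity) demands handling the fundamental-group-level obstruction when~$n\ge 2$. Indeed, a complex that is~$(n-1)$-acyclic but not~$(n-1)$-connected produces an~$H_L$ that is of type~$\sfFP_n$ but not~$\sfF_n$, so the delicate point is precisely separating these two conclusions rather than collapsing to the homological version.
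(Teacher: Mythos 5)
First, a point of comparison: the paper does not prove this theorem at all --- it is quoted from Bestvina--Brady as a known result --- so your sketch has to be measured against the original argument. Your positive direction reproduces that argument faithfully and correctly: the Salvetti complex, the affine Morse extension of~$\phi$ to the CAT(0) cube complex~$X$, the identification of both ascending and descending links with~$L$ inside the octahedralization, the Morse Lemma giving $n$-connected inclusions of slabs when~$L$ is $(n-1)$-connected, contractibility of~$X$ forcing each slab to be $(n-1)$-connected, and Brown's criterion applied to the free, slab-cocompact $H_L$-action. That half is in good shape.

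The genuine gap is the converse, which you state as intent rather than argument: ``some class \dots cannot be resolved by the Morse moves'' is precisely the theorem to be proved, not a proof of it. Two distinct mechanisms are required. First, when $\widetilde H_k(L;\IZ)\neq 0$ for some $k\le n-1$, Bestvina--Brady show that the directed system $\bigl\{\widetilde H_k(X_{[-t,t]})\bigr\}$ is \emph{not} essentially trivial; the crucial input is an injectivity statement --- a nontrivial $k$-cycle carried by the descending link of a vertex~$v$ lying above a slab remains nontrivial in the homology of the appropriate sublevel/slab (via a Morse-theoretic deformation retraction identifying how the link sits in it), yet dies upon crossing~$v$, and infinitely many such vertices defeat essential triviality, so Brown's criterion obstructs type~$\sfFP_{k+1}$. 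Nothing in your sketch supplies this injectivity; without it, a cycle in a coned-off link could a priori already bound inside the slab and no obstruction would arise. Second, when~$L$ is $(n-1)$-acyclic but not simply connected (possible for $n\ge 2$), homology cannot detect the failure at all: one must prove that $H_L$ is not finitely presented, hence not of type~$\sfF_2$ and so not of type~$\sfF_n$, and this is a separate, substantial theorem in Bestvina--Brady's paper with its own analysis of the directed system of fundamental groups of slabs. You correctly flag this homotopical--homological separation as ``the main obstacle,'' but naming the obstacle is exactly where the proof would have to begin, so as written the converse direction is missing rather than sketched.
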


\begin{cor}
\label{cor:BB}
	Let~$L$ be a finite flag simplicial complex and let~$n\in \IN$ with~$n\ge 1$.
	If the right-angled Artin group~$A_L$ is $(n,n+1)$-coherent, then every full subcomplex of~$L$ that is $(n-1)$-connected is $n$-connected.
\end{cor}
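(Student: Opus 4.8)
The plan is to translate the topological condition on full subcomplexes into a finiteness property of Bestvina--Brady groups and then let the coherence hypothesis do the work. First I would record the standard fact that a full subcomplex $K$ of $L$ gives rise to an honest subgroup $A_K\le A_L$, namely the subgroup generated by the vertices of $K$. Fullness is exactly what makes this work: the assignment sending each standard generator of $A_L$ corresponding to a vertex of $K$ to itself, and every other standard generator to the identity, respects the defining commutation relations (if $u,v\in K$ are adjacent in $L$, then fullness forces them to be adjacent in $K$, so the relation $[u,v]=1$ survives; otherwise the relation becomes trivial). This defines a retraction $A_L\to A_K$ splitting the natural map $A_K\to A_L$, so the latter is injective. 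Consequently the Bestvina--Brady group $H_K\le A_K$ is a subgroup of $A_L$.

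With this embedding in hand, I would run a short chain of implications using both directions of the Bestvina--Brady correspondence. Suppose $K$ is $(n-1)$-connected. By the Bestvina--Brady theorem, the group $H_K$ is of type $\sfF_n$. Since $H_K$ is a subgroup of $A_L$ and $A_L$ is $(n,n+1)$-coherent by hypothesis, the group $H_K$ is of type $\sfF_{n+1}$. Applying the Bestvina--Brady theorem in the reverse direction (with $n+1$ in place of $n$, so that type $\sfF_{n+1}$ corresponds to $n$-connectedness), we conclude that $K$ is $n$-connected, which is precisely the assertion.

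The essential ingredients are therefore the embedding $H_K\hookrightarrow A_L$ and the fact that the Bestvina--Brady theorem is an equivalence rather than a one-way implication; once these are set up, the coherence hypothesis supplies the single nontrivial step. I expect the only point genuinely requiring care to be the verification that \emph{fullness} of $K$ is what guarantees that $A_K$ embeds as a retract, since for a non-full subcomplex the inherited commutation relations need not match and the retraction can fail to be well defined. Degenerate cases are consistent with the statement and can be dismissed at once: for example, if $K$ is a single vertex, then $H_K$ is trivial and $K$ is contractible, so the conclusion holds vacuously.
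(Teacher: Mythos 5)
Your proof is correct and is exactly the argument the paper intends (the corollary is stated without proof as an immediate consequence of the Bestvina--Brady theorem): embed $H_K\le A_K\le A_L$ via the retraction afforded by fullness, then apply the Bestvina--Brady equivalence in both directions with the coherence hypothesis bridging type~$\sfF_n$ to type~$\sfF_{n+1}$. The only detail worth adding is the one-line check that a full subcomplex~$K$ of the flag complex~$L$ is itself flag, so that the Bestvina--Brady theorem as stated applies to~$K$.
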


For~$n=1$, the converse of Corollary~\ref{cor:BB} holds by Theorem~\ref{thm:Droms}.
For~$n\ge 2$, we do not know if the converse of Corollary~\ref{cor:BB} holds.

\begin{ex}
	Let~$L$ be the flag simplicial complex that is the 2-fold suspension of~$\Delta^2\sqcup \Delta^0$.
	Then~$L$ is homotopy equivalent to a 2-dimensional sphere.
	The right-angled Artin group~$A_L$ is isomorphic to $F_2\times F_2\times (\IZ^3\ast \IZ)$.
	The group~$A_L$ is not $(1,2)$-coherent by Theorem~\ref{thm:Droms} because~$L$ contains a cycle of length~$4$ as a full subcomplex.
	The group~$A_L$ is also not $(2,3)$-coherent by Corollary~\ref{cor:BB} because~$L$ is $1$-connected and not $2$-connected.
	On the other hand, the group~$A_L$ is $(4,\infty)$-coherent by Theorem~\ref{thm:RAAG} because~$L$ is $4$-dimensional with~$H_4(L;\IQ)=0$ and hence lies in the class~$\calT_4$.
	We do not know if the group~$A_L$ is $(3,\infty)$-coherent.
	Note that~$L$ does not lie in the class~$\calT_3$, while every full subcomplex of~$L$ that is $2$-connected is $3$-connected.
\end{ex}

\begin{ack}
	We thank Sam Fisher and Marco Linton for very helpful comments.
	The first author was partially supported by the SFB~1085 \emph{Higher Invariants} (Universit\"at Regensburg, funded by the DFG).
\end{ack} 

\bibliographystyle{alpha}
\bibliography{bib}

\begin{thebibliography}{FHL24}

\bibitem[BB97]{Bestvina-Brady97}
M.~Bestvina and N.~Brady.
\newblock Morse theory and finiteness properties of groups.
\newblock {\em Invent.\ Math.}, 129(3):445--470, 1997.

\bibitem[Bie81]{Bieri81}
R.~Bieri.
\newblock {\em Homological dimension of discrete groups}.
\newblock Queen Mary College Mathematics Notes. Queen Mary College, Department
  of Pure Mathematics, London, second edition, 1981.

\bibitem[Bro82]{Brown82}
K.S. Brown.
\newblock {\em Cohomology of groups}, volume~87 of {\em Graduate Texts in
  Mathematics}.
\newblock Springer-Verlag, New York-Berlin, 1982.

\bibitem[Dir61]{Dirac61}
G.A. Dirac.
\newblock On rigid circuit graphs.
\newblock {\em Abh.\ Math. Sem.\ Univ.\ Hamburg}, 25:71--76, 1961.

\bibitem[DL03]{Davis-Leary03}
M.W. Davis and I.J. Leary.
\newblock The {$l^2$}-cohomology of {A}rtin groups.
\newblock {\em J.~London Math.\ Soc.~(2)}, 68(2):493--510, 2003.

\bibitem[DO12]{DO12}
M.W. Davis and B.~Okun.
\newblock Cohomology computations for {A}rtin groups, {B}estvina-{B}rady
  groups, and graph products.
\newblock {\em Groups Geom.\ Dyn.}, 6(3):485--531, 2012.

\bibitem[Dro87]{Droms87}
C.~Droms.
\newblock Graph groups, coherence, and three-manifolds.
\newblock {\em J.~Algebra}, 106(2):484--489, 1987.

\bibitem[FHL24]{FHL24}
S.P. Fisher, S.~Hughes, and I.J. Leary.
\newblock Homological growth of {A}rtin kernels in positive characteristic.
\newblock {\em Math.\ Ann.}, 389(1):819--843, 2024.

\bibitem[Fis]{Fisher24}
S.P. Fisher.
\newblock On the cohomological dimension of kernels of maps to $\mathbb{Z}$.
\newblock {\em Geom.\ Topol.}
\newblock To appear, arXiv:2403.18758, 2024.

\bibitem[Geo08]{Geoghegan08}
R.~Geoghegan.
\newblock {\em Topological methods in group theory}, volume 243 of {\em
  Graduate Texts in Mathematics}.
\newblock Springer, New York, 2008.

\bibitem[GL17]{GL}
V.~Guirardel and G.~Levitt.
\newblock {JSJ} decompositions of groups.
\newblock {\em Ast\'erisque}, (395):vii+165, 2017.

\bibitem[Hul21]{Hull21}
M.~Hull.
\newblock Splittings of right-angled {A}rtin groups.
\newblock {\em Internat.\ J.~Algebra Comput.}, 31(7):1429--1432, 2021.

\bibitem[HW21]{Haglund-Wise21}
F.~Haglund and D.T. Wise.
\newblock A note on finiteness properties of graphs of groups.
\newblock {\em Proc.\ Amer.\ Math.\ Soc.\ Ser.~B}, 8:121--128, 2021.

\bibitem[JZL25]{JZL}
A.~Jaikin-Zapirain and M.~Linton.
\newblock On the coherence of one-relator groups and their group algebras.
\newblock {\em Ann.\ of Math.~(2)}, 201(3):909--959, 2025.

\bibitem[Kob22]{Koberda_survey}
T.~Koberda.
\newblock Geometry and combinatorics via right-angled {A}rtin groups.
\newblock In {\em In the tradition of {T}hurston {II}. {G}eometry and groups},
  pages 475--518. Springer, Cham, 2022.

\bibitem[KS70]{KS70}
A.~Karrass and D.~Solitar.
\newblock The subgroups of a free product of two groups with an amalgamated
  subgroup.
\newblock {\em Trans.\ Amer.\ Math.\ Soc.}, 150:227--255, 1970.

\bibitem[KV23]{Kochloukova-Vidussi23}
D.H. Kochloukova and S.~Vidussi.
\newblock Higher dimensional algebraic fiberings of group extensions.
\newblock {\em J.~Lond.\ Math.\ Soc.~(2)}, 108(3):978--1003, 2023.

\bibitem[KV25]{Kochloukova-Vidussi25}
D.H. Kochloukova and S.~Vidussi.
\newblock Finiteness properties of algebraic fibers of group extensions.
\newblock In {\em Geometry and topology of aspherical manifolds}, volume 816 of
  {\em Contemp.\ Math.}, pages 45--68. Amer. Math. Soc., Providence, RI, 2025.

\bibitem[Var19]{Varghese19}
O.~Varghese.
\newblock On coherence of graph products of groups and {C}oxeter groups.
\newblock {\em Discrete Math.}, 342(7):2100--2105, 2019.

\bibitem[Vid]{Vidussi23}
S.~Vidussi.
\newblock Higher incoherence of the automorphism groups of a free group.
\newblock {\em Proc.\ Roy.\ Soc.\ Edinburgh Sect.~A}.
\newblock Published online, doi:10.1017/prm.2025.10061, 2025.

\bibitem[Wis20]{Wise_survey}
D.T. Wise.
\newblock An invitation to coherent groups.
\newblock In {\em What's next?---the mathematical legacy of {W}illiam {P}.
  {T}hurston}, volume 205 of {\em Ann.\ of Math.\ Stud.}, pages 326--414.
  Princeton Univ. Press, Princeton, NJ, 2020.

\end{thebibliography}

\setlength{\parindent}{0cm}

\end{document}